\newcommand{\IZ}{\mathbb Z}
\newcommand{\IN}{\mathbb N}
\newcommand{\IQ}{\mathbb Q}
\newcommand{\IR}{\mathbb R}
\newcommand{\w}{\omega}
\newtheorem{theorem}{Theorem}
\newtheorem{proposition}{Proposition}
\newtheorem{corollary}{Corollary}
\newtheorem{lemma}{Lemma}
\newtheorem{problem}{Problem}
\theoremstyle{definition}
\newtheorem{definition}{Definition}
\newtheorem{remark}{Remark}
\begin{document}

\title{On 2-swelling topological groups}
\author{Taras Banakh}
\address{Ivan Franko National University of Lviv (Ukraine) and Jan Kochanowski University in Kielce (Poland)}
\email{t.o.banakh@gmail.com}
\subjclass{54D30, 22A05, 28C10}
\keywords{Топологічна група}

\begin{abstract}
A topological group $G$ is called {\em 2-swelling} if for any compact subsets $A,B\subset G$ and elements $a,b,c\in G$ the inclusions $aA\cup bB\subset A\cup B$ and $aA\cap bB\subset c(A\cap B)$ are equivalent to the equalities $aA\cup bB=A\cup B$ and $aA\cap bB=c(A\cap B)$.
We prove that an (abelian) topological group $G$ is 2-swelling if each 3-generated (resp. 2-generated) subgroup of $G$ is discrete. This implies that the additive group $\mathbb Q$ of rationals is 2-swelling and each locally finite topological group is 2-swelling.
\end{abstract}

\maketitle

In this paper we give a partial solution the following question \cite{Mur} of Alexey Muranov posted at MathOverFlow on July 24, 2013.

\begin{problem}[Muranov]\label{pr:Muranov} Let $A,B$ be compact subsets of a Hausdorff topological group $G$ such that $aA\cup bB\subset A\cup B$ and $aA\cap bB\subset c(A\cap B)$ for some elements $a,b,c\in G$. Is $aA\cup bB=A\cup B$ and $aA\cap bB=c(A\cap B)$?
\end{problem}

For example, for any real numbers $u<v<w<t$ the closed intervals $A=[u,w]$ and $B=[v,t]$ satisfy the equalities $(a+A)\cup (b+B)=A\cup B$ and $(a+A)\cap (b+B)=c+(A\cap B)$ for  $a=t-w$, $b=u-v$ and $c=v+w-u-t$.

First we observe that Muranov's Problem~\ref{pr:Muranov} has an affirmative answer for finite sets $A,B\subset G$.

\begin{proposition} Let $A,B$ be finite subsets of a group $G$ and $aA\cup bB\subset A\cup B$ and $aA\cap bB\subset c(A\cap B)$ for some elements $a,b,c\in G$. Then $aA\cup bB=A\cup B$ and $aA\cap bB=c(A\cap B)$.
\end{proposition}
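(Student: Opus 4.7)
The proof is a straightforward cardinality argument exploiting the finiteness of $A$ and $B$, so there is no real obstacle. The plan is to use the fact that left multiplication in $G$ is a bijection, hence $|aA|=|A|$, $|bB|=|B|$, and $|c(A\cap B)|=|A\cap B|$, and then to squeeze the quantity $|aA\cap bB|$ between two equal bounds via inclusion--exclusion.

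First I would record the inclusion--exclusion identities
\[
|aA\cup bB|=|A|+|B|-|aA\cap bB|,\qquad |A\cup B|=|A|+|B|-|A\cap B|.
\]
From the hypothesis $aA\cup bB\subset A\cup B$ and the finiteness of the sets I would conclude $|aA\cup bB|\le|A\cup B|$, which, after substituting the identities and cancelling, yields the lower bound $|aA\cap bB|\ge |A\cap B|$. On the other hand, the hypothesis $aA\cap bB\subset c(A\cap B)$ together with $|c(A\cap B)|=|A\cap B|$ gives the reverse upper bound $|aA\cap bB|\le|A\cap B|$.

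Combining the two bounds produces the equality of cardinalities $|aA\cap bB|=|A\cap B|=|c(A\cap B)|$. Since $aA\cap bB$ is contained in the finite set $c(A\cap B)$ and has the same cardinality, the inclusion must be an equality, giving $aA\cap bB=c(A\cap B)$. Feeding this back into the inclusion--exclusion formula shows $|aA\cup bB|=|A\cup B|$, and the finite inclusion $aA\cup bB\subset A\cup B$ then forces $aA\cup bB=A\cup B$. This completes the argument; the only place where finiteness is genuinely used is in the step ``inclusion + equal cardinality $\Rightarrow$ equality,'' which is exactly the reason the general compact case requires substantially more work.
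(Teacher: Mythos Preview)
Your argument is correct and is essentially identical to the paper's own proof: both use the bijectivity of left translation together with inclusion--exclusion to squeeze the cardinalities and then upgrade the inclusions to equalities via ``finite inclusion with equal cardinality $\Rightarrow$ equality.'' The only difference is cosmetic ordering---you first isolate the bounds on $|aA\cap bB|$, while the paper writes a single chain of inequalities---but the content is the same.
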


\begin{proof} Evaluating the cardinality of the corresponding sets, we conclude that
$$
\begin{aligned}
|A\cup B|&\ge |aA\cup bB|=|aA|+|bB|-|aA\cap bB|\ge\\
&\ge |A|+|B|-|c(A\cap B)|=|A|+|B|-|A\cap B|=|A\cup B|,
\end{aligned}
$$
$|A\cup B|=|aA\cup bB|$ and $|aA\cap bB|=|c(A\cap B)|$, which imply the desired equalities $aA\cup bB=A\cup B$ and $aA\cap bB=c(A\cap B)$.
\end{proof}

The Muranov's problem can be reformulated as a problem of 2-parametric generalization of the classical  Swelling Lemma, see \cite[1.9]{CHK}.

\begin{theorem}[Swelling Lemma] For any compact subset $A$ of a Hausdorff topological semigroup $S$ and any element $a\in S$ the inclusion $aA\supset A$ is equivalent to the equality $aA=A$.
\end{theorem}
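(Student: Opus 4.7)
I would argue by contradiction. Assume $A\subset aA$ but $aA\neq A$. Since every element of the nonempty set $aA\setminus A$ has the form $aw$ with $w\in A$ and $aw\notin A$, there exists $y\in A$ with $ay\notin A$. Using $A\subset aA$ repeatedly (with the axiom of choice), build a sequence $(x_n)_{n\ge 0}\subset A$ with $x_0=y$ and $ax_{n+1}=x_n$ for all $n$; this is a backward $a$-orbit of $y$ living inside $A$.

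Let $O=\overline{\{x_n:n\ge 0\}}\subset A$, which is compact as a closed subset of $A$. Continuity of left multiplication $L_a\colon s\mapsto as$, combined with the identity $L_a(\{x_n:n\ge 0\})=\{ay\}\cup\{x_n:n\ge 0\}$, yields the exact equality $aO=O\cup\{ay\}$. Since $ay\notin A\supset O$ we have $ay\notin O$, hence $aO\supsetneq O$. The task is to show this strict inclusion is impossible.

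The key device is the cluster set $L=\bigcap_{N\ge 0}\overline{\{x_n:n\ge N\}}$. It is nonempty (as $A$ is compact), closed, hence compact, and a standard subnet argument using $ax_{n+1}=x_n$ together with continuity of $L_a$ shows that $L$ is left $a$-invariant: $aL=L$. If $y\in L$, then $ay\in aL=L\subset A$, contradicting $ay\notin A$; so $y\notin L$.

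The main obstacle is closing the argument when $y\notin L$. I plan to attack it with a Zorn's lemma argument applied to $\mathcal N=\{C\subset A:C\text{ nonempty compact},\,y\in C,\,C\subset aC\}$: a decreasing chain in $\mathcal N$ has intersection again in $\mathcal N$ (by repeating the backward-orbit construction inside the intersection and exploiting compactness of the chain elements), so $\mathcal N$ admits a minimal element $M$. Minimality should force $M$ to equal an orbit closure $O$ built from $y$ as above, and then the $a$-invariance of the cluster set $L_M\subset M$, combined with the backward iteration $ax_{n+1}=x_n$ inside $M$, is meant to push $y$ into $L_M$, producing the contradiction $ay\in aL_M=L_M\subset A$. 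The delicate step is verifying that the minimality of $M$ genuinely traps $y$ in $L_M$, rather than merely yielding an $L_a$-invariant compact piece of $M$ staying comfortably separated from~$y$; this is the principal technical hurdle of the proof.
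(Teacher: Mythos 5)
Your proposal is not yet a proof, and the gap is not a minor one: the step you defer --- using minimality of $M$ to ``trap'' $y$ in the invariant cluster set $L_M$ --- is the entire content of the lemma, while everything you do carry out (the backward orbit, $aO=O\cup\{ay\}$, $aL=L$, $y\notin L$) is the routine part. Worse, no refinement of your scheme can close it, because your argument uses nothing about $S$ except continuity of the single translation $L_a$, compactness of $A$, and Hausdorffness, and under those hypotheses alone the conclusion is false. Model situation: let $f(t)=1-|2t-1|$ be the tent map on $[0,1]$ and $A=[0,1/2]$; then $A\subset f(A)=[0,1]$ but $f(A)\ne A$. Taking $y=1/2$, all of your objects exist exactly as described: the backward orbit is $x_n=2^{-n-1}$, the cluster set is $L=\{0\}$ with $f(L)=L$ and $y\notin L$, and the minimal element of your family $\mathcal N$ is $M=\{0\}\cup\{2^{-n-1}:n\ge0\}$, whose invariant part is $\{0\}$ and does not contain $y$ --- and there is no contradiction to be found, because the swelling really happens. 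This model even embeds into an honest Hausdorff topological semigroup: in $S=C([0,1],[0,1])$ with the sup-metric and composition (which is jointly continuous), let $a$ be the tent map and $A=\{c_y: y\in[0,1/2]\}$ the compact set of constant maps with values in $[0,1/2]$; then $aA=\{c_z: z\in[0,1]\}$ strictly contains $A$. So the Swelling Lemma in the generality quoted here genuinely needs more than your ingredients; the classical versions are proved under hypotheses ensuring that the closed subsemigroup generated by $a$ is compact (for instance when $S$ itself is compact), or in the group setting used throughout this paper, and any correct proof must exploit that extra structure, which your plan never touches.

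For comparison (the paper itself gives no proof, it only cites \cite[1.9]{CHK}), here is how the argument closes in the case actually relevant to this paper, a Hausdorff topological group $G$: the missing input is that translations are homeomorphisms. From $A\subset aA$ one gets $a^{-n}x\in A$ for all $n\ge0$ and all $x\in A$. A cluster point $p\in A$ of the sequence $(a^{-n}x)_{n\ge0}$ gives, for every neighborhood $W$ of $1$, a neighborhood $U$ of $p$ with $UU^{-1}\subset W$ and indices $n<m$ with $a^{-n}x,a^{-m}x\in U$, whence $a^{-(m-n)}=(a^{-m}x)(a^{-n}x)^{-1}\in W$; thus $1\in\overline{\{a^{-k}:k\ge1\}}$. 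Applying the homeomorphism $g\mapsto agx$ yields $ax\in\overline{\{a^{-j}x:j\ge0\}}\subset A$, since $A$ is compact and hence closed; therefore $aA\subset A$ and so $aA=A$. Note how this argument uses inversion (or, in the semigroup versions, compactness of $\overline{\{a^n:n\ge1\}}$ and its kernel) precisely at the point where your cluster-set argument stalls; that is the idea your proposal is missing, not a technical detail about minimal compact sets.
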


In this paper we shall give some partial affirmative answers to Muranov's Problem~\ref{pr:Muranov}. First, we introduce appropriate definitions.

\begin{definition} A topological group $G$ is called {\em 2-swelling} if for any compact subsets $A,B\subset G$ and elements $a,b,c\in G$ the inclusions $aA\cup bB\subset A\cup B$ and $aA\cap bB\subset c(A\cap B)$ are equivalent to the equalities $aA\cup bB=A\cup B$ and $aA\cap bB=c(A\cap B)$.
\end{definition}

\begin{definition} A topological group $G$ is called {\em weakly 2-swelling} if for any compact subsets $A,B\subset G$ and elements $a,b\in G$ with $aA\cap bB=\emptyset$ the inclusion $aA\cup bB\subset A\cup B$ implies $aA\cup bB=A\cup B$ and $A\cap B=\emptyset$.
\end{definition}

The following trivial proposition implies that the class of (weakly) 2-swelling topological groups is closed under taking subgroups.

\begin{proposition} Let $h:G\to H$ be a continuous injective homomorphism of topological groups. If the topological group $H$ is (weakly) 2-swelling, then so is the group $G$.
\end{proposition}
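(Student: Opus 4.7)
The strategy is to push the hypotheses forward along $h$ into $H$, apply the (weak) 2-swelling of $H$ there, and then pull the resulting equalities back to $G$ using that $h$ is injective.

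The one observation worth isolating at the outset is that injectivity of $h$ makes it commute with every set-theoretic operation occurring in the definitions: certainly $h(X\cup Y)=h(X)\cup h(Y)$ and $h(gX)=h(g)h(X)$ for all $X,Y\subset G$ and $g\in G$, but also $h(X\cap Y)=h(X)\cap h(Y)$ because $h$ is one-to-one. Given compact $A,B\subset G$ and $a,b,c\in G$ witnessing the hypothesis inclusions, the continuity of $h$ makes $h(A),h(B)$ compact in $H$, and the above identities transfer $aA\cup bB\subset A\cup B$ and $aA\cap bB\subset c(A\cap B)$ verbatim to $H$. I would then invoke the 2-swelling of $H$ to obtain the corresponding equalities in $H$, and finally rewrite each such equality as an equality of $h$-images of subsets of $G$ and cancel $h$ by injectivity (if $h(X)=h(Y)$ for $X,Y\subset G$, then $X=Y$).

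For the weakly 2-swelling variant the same pipeline works: the hypothesis $aA\cap bB=\emptyset$ passes to $h(a)h(A)\cap h(b)h(B)=\emptyset$ by the same intersection-preservation, and the extra conclusion $h(A)\cap h(B)=\emptyset$ pulls back to $A\cap B=\emptyset$ since any element of $A\cap B$ would map into $h(A)\cap h(B)$. I do not expect any genuine obstacle: the author rightly labels the statement trivial, and the entire content is bookkeeping around the fact that an injective continuous homomorphism preserves compactness together with all the set operations appearing in the two definitions.
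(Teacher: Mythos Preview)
Your proposal is correct and is exactly the routine verification the paper has in mind; indeed, the paper states the proposition without proof, calling it ``trivial,'' and your argument (push compact sets and set operations forward along the continuous injective homomorphism, apply the hypothesis in $H$, then pull equalities back by injectivity) is the evident unpacking of that word.
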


  A subgroup $H$ of a group $G$ is called {\em $n$-generated} for some $n\in\IN$ if $H$ is generated by set of cardinality $\le n$.
It can be shown that a topological group $G$ is Hausdorff if each 1-generated subgroup of $G$ is discrete.
The main result of this paper is the following theorem.

\begin{theorem}\label{t:main} An (abelian) topological group $G$ is
\begin{itemize}\itemsep=0pt\parskip=0pt
\item 2-swelling if each 3-generated (resp. 2-generated) subgroup of $G$ is discrete;
\item weakly 2-swelling if each 2-generated (resp. 1-generated) subgroup of $G$ is discrete.
\end{itemize}
\end{theorem}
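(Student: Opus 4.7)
The proof rests on an orbit-decomposition argument that reduces the compact case to the finite Proposition~1.

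\emph{Orbit lemma.} Let $H\le G$ be a discrete subgroup with $a,b,c\in H$. For each coset $K=Hx$, the set $K$ is closed and discrete, so $A_K:=A\cap K$ and $B_K:=B\cap K$ are finite (compact intersected with closed discrete). Since $a,b,c\in H$ stabilize $K$ (i.e.\ $aK=bK=cK=K$), the global inclusions $aA\cup bB\subset A\cup B$ and $aA\cap bB\subset c(A\cap B)$ restrict coset-wise to $aA_K\cup bB_K\subset A_K\cup B_K$ and $aA_K\cap bB_K\subset c(A_K\cap B_K)$, using $c(A\cap B)\cap K=c((A\cap B)\cap c^{-1}K)=c(A_K\cap B_K)$ since $c^{-1}K=K$. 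Proposition~1 applied on each finite coset yields coset-wise equalities, which union over all $K$ to the global equalities $aA\cup bB=A\cup B$ and $aA\cap bB=c(A\cap B)$. For weakly 2-swelling (where $aA\cap bB=\emptyset$) the intersection condition is automatic, so $c\in H$ is not required.

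\emph{Non-abelian cases.} Take $H=\langle a,b,c\rangle$ (3-generated, hence discrete by hypothesis) for 2-swelling, and $H=\langle a,b\rangle$ (2-generated, hence discrete) for weakly 2-swelling; in each case apply the orbit lemma.

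\emph{Abelian refinement (walk argument).} To save one generator in the abelian case I would use a \emph{closing-up} trick. Starting from any $x_{0}\in A\cup B$, form the walk $x_{n+1}:=x_{n}a$ if $x_{n}\in A$, otherwise $x_{n+1}:=x_{n}b$. The walk stays in $A\cup B$ and in the single coset $x_{0}\langle a,b\rangle$; whenever this coset is discrete, compactness of $A\cup B$ forces the walk to cycle, which produces a relation $a^{p}b^{q}=e$ with $p,q\in\IN$, $p+q\ge 1$. In the torsion-free setting this collapses $\langle a,b\rangle$ to a cyclic subgroup $\langle d\rangle$, so that $\langle a,b,c\rangle=\langle d,c\rangle$ is only 2-generated, hence discrete by the abelian 2-swelling hypothesis, and the orbit lemma applies. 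In the case $aA\cap bB\ne\emptyset$ one first translates both $A$ and $B$ by $y^{-1}$ for a chosen $y\in A\cap B$ with $cy\in aA\cap bB$, placing $e\in A\cap B$ and $c\in aA\cap bB$ so that the walk from $e\in A$ may be run; the opposite case $aA\cap bB=\emptyset$ reduces to weakly 2-swelling. Abelian weakly 2-swelling is handled analogously, reducing to a 1-generated discrete subgroup $\langle d\rangle$ via the same closing-up.

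\emph{Main obstacle.} The delicate step is the abelian refinement. The walk argument is clean in the torsion-free setting (which already covers the motivating example $\IQ$ and, via triviality, locally finite groups), but a relation $a^{p}b^{q}=e$ in the presence of torsion in $\langle a,b\rangle$ need not force cyclicity, so a case analysis using the structure of finitely generated discrete abelian groups is required. Moreover, the abelian weakly 2-swelling statement assumes only 1-generated subgroups discrete, so the walk must be organized so that the relevant coset is already discrete under this weaker hypothesis; this is the most subtle point of the proof.
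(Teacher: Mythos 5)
Your orbit decomposition is essentially the paper's own argument for the discrete cases: the paper intersects everything with the cosets $H_3x$ of the discrete subgroup $H_3=\langle a,b,c\rangle$, notes these traces are finite (its Lemma~1), and runs the inclusion--exclusion count of Proposition~1 coset by coset; the non-abelian bullets, including taking $c=1_G$ in the weakly 2-swelling case, go exactly as you say. The genuine gap is in the abelian refinements, and it is precisely where you flag it, but it is not a technicality that the structure theory of finitely generated abelian groups will repair. For the abelian 2-swelling bullet your walk yields a relation $a^pb^q=e$, which bounds the rank of $\langle a,b\rangle$ but, in the presence of torsion (say $\langle a,b\rangle\cong\IZ\times\IZ/2\IZ$), does not make $\langle a,b,c\rangle$ 2-generated; nothing then prevents $\langle a,b,c\rangle$ from being non-discrete, and your finite coset-wise counting is unavailable. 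For the abelian weakly 2-swelling bullet the method fails outright: only cyclic subgroups are assumed discrete, so $\langle a,b\rangle$ may be non-discrete (already in $G=\IR$ with $b/a$ irrational, which is exactly the case needed for the corollary that $\IR^n$ is weakly 2-swelling), the walk need not cycle, and no relation is produced.

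What is missing is the paper's main technical device (its Theorem~2, part~II), which handles a \emph{non-discrete} $H_3$. One chooses a subset $T\subset\{a,b,c\}$ containing two of the three letters (in the weak case one first sets $c:=a$ or $c:=1_G$, so that $T$ may be a single element) such that $H_2=\langle T\rangle$ is discrete, closed and normal in $H_3$; under your hypotheses this holds because $G$ is Hausdorff and abelian. Working with the finite counting functions $x\mapsto|A\cap H_2x|$ (resp.\ $x\mapsto|A\cap B\cap H_2x|$) on $H_2$-cosets, the inclusion--exclusion estimate gives a one-sided monotonicity under translation by the remaining generator $t$, e.g.\ $|A\cap H_2a^{-1}x|\le|A\cap H_2x|$. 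If $H_3$ is not discrete, then $H_3/H_2$ is a non-discrete monothetic group, so the forward orbit $\bigcup_{m\ge n_x}H_2t^{m}x$ is dense in $H_3x$; combining this with the closedness of the superlevel sets $\{g\in G:|A\cap H_2g|\ge n\}$ (the second half of Lemma~1, which is why closedness of $H_2$ is needed) shows the counting function attains its maximum on every coset $H_2t^mx$, so all inequalities become equalities and the inclusions become equalities. This density-plus-closed-superlevel-set argument is what your closing-up walk cannot replace; without it, both abelian halves of the theorem remain unproved.
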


\begin{corollary} For every $n\in\IN$ the group $\IQ^n$ is 2-swelling and the group $\IR^n$ is weakly 2-swelling.
\end{corollary}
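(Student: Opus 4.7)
The plan is to deduce the corollary directly from Theorem~\ref{t:main}, applied in its abelian version to the topological groups $\IQ^n$ and $\IR^n$ (each carrying the natural topology inherited from $\IR^n$). For this I need to verify the appropriate discreteness hypothesis on small subgroups in each of the two cases.

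First I would handle $\IR^n$. Since $\IR^n$ is abelian, Theorem~\ref{t:main} tells me that it suffices to check that every $1$-generated subgroup of $\IR^n$ is discrete. Such a subgroup is cyclic, of the form $\IZ v$ for some $v\in\IR^n$. If $v=0$ it is trivial, and if $v\ne 0$ its points lie on the line $\IR v$ with consecutive spacing equal to the norm of $v$, so $\IZ v$ is a discrete subset of $\IR^n$. Hence Theorem~\ref{t:main} immediately yields that $\IR^n$ is weakly 2-swelling.

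Next I would handle $\IQ^n$, where (again by the abelian version of Theorem~\ref{t:main}) I must verify that every $2$-generated subgroup is discrete in the subspace topology from $\IR^n$. Such a subgroup has the form $H=\IZ u+\IZ v$ with $u,v\in\IQ^n$. Clearing denominators, I pick $N\in\IN$ with $u'=Nu,v'=Nv\in\IZ^n$, so that
$$H=\tfrac{1}{N}\bigl(\IZ u'+\IZ v'\bigr).$$
The group $\IZ u'+\IZ v'$ is a subgroup of $\IZ^n$, hence a discrete subset of $\IR^n$ (it is contained in the discrete set $\IZ^n$), and multiplication by $1/N$ is a homeomorphism of $\IR^n$, so $H$ is discrete in $\IR^n$ and therefore in $\IQ^n$. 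Theorem~\ref{t:main} then gives that $\IQ^n$ is 2-swelling.

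The only real content is the observation in the second step that a finitely generated subgroup of $\IQ^n$, after clearing a common denominator, lands inside the discrete lattice $\IZ^n$; beyond this the corollary is a routine verification of the hypotheses of Theorem~\ref{t:main}. Note that one should not try to strengthen the $\IR^n$ statement to ``2-swelling'': already in $\IR$, the $2$-generated subgroup $\IZ+\IZ\sqrt 2$ is dense, so the 2-generated discreteness hypothesis fails and the method of Theorem~\ref{t:main} does not apply.
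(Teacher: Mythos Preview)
Your argument is correct and is exactly the intended one: the paper states the corollary without proof, as an immediate consequence of Theorem~\ref{t:main}, and your verification that every $1$-generated subgroup of $\IR^n$ and every $2$-generated subgroup of $\IQ^n$ is discrete (the latter by clearing denominators into $\IZ^n$) is precisely the routine check needed.
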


\begin{corollary} Each locally finite topological group is 2-swelling.
\end{corollary}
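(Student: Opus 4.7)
The plan is to reduce the corollary to the first clause of Theorem \ref{t:main} by checking its hypothesis, namely that every $3$-generated subgroup of $G$ is discrete. First I would unpack the definition: since $G$ is locally finite, every finitely generated subgroup of $G$ is finite, so in particular every $3$-generated subgroup $H\le G$ is a finite group.

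Next I would verify that each such finite $H$ is discrete as a subspace of $G$. Assuming $G$ is Hausdorff (which is standard for topological groups and, as the paper notes, in fact follows from discreteness of $1$-generated subgroups), this is the usual finite-intersection argument: for each $h\in H\setminus\{e\}$ choose an open neighborhood $U_h$ of the identity $e$ with $h\notin U_h$, and form the finite intersection $U:=\bigcap_{h\in H\setminus\{e\}}U_h$. Then $U$ is an open neighborhood of $e$ meeting $H$ only in $\{e\}$, so $e$ is isolated in $H$; translating by elements of $H$ shows that $H$ is discrete as a subspace of $G$.

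Finally, with every $3$-generated subgroup of $G$ discrete, the first clause of Theorem \ref{t:main} applies directly and yields that $G$ is $2$-swelling. I do not anticipate any serious obstacle here: the corollary is essentially a one-line deduction once local finiteness is unpacked. The only mild subtlety is the Hausdorff hypothesis, which is needed to separate the finitely many nonidentity elements of a $3$-generated subgroup from the identity; this is either implicit in the paper's blanket conventions or can be added to the hypothesis at no cost.
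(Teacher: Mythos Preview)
Your proposal is correct and matches the paper's intended deduction: the corollary is stated without proof as an immediate consequence of Theorem~\ref{t:main}, via the observation that every finitely generated subgroup of a locally finite group is finite and hence discrete. Your caveat about the Hausdorff hypothesis is well-taken; the paper tacitly assumes it here (alternatively, when $H_3$ is finite the coset-counting argument in the proof of Theorem~\ref{texnic} goes through regardless of discreteness, since each intersection $A\cap H_3x$ is then automatically finite).
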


We recall that a group $G$ is {\em locally finite} if each finite subset $F\subset G$ generates a finite subgroup. In the proof of Theorem~\ref{t:main} we shall use the following lemma.

\begin{lemma}\label{l1} Let $K\subset G$ be a compact subset of a topological group and $H$ be a discrete subgroup of $G$. Then the set $K\cap H$ is finite and $\sup_{x\in G}|K\cap Hx|\le |KK^{-1}\cap H|<\infty$. If the discrete subgroup $H$ is closed in $G$, then for every $n\in\IN$ the set $\{x\in G:|K\cap Hx|\ge n\}$ is closed in $G$.
\end{lemma}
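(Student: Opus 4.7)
The lemma has three assertions, which I would handle in order since each builds on the previous.

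For the first assertion that $K\cap H$ is finite, discreteness of $H$ provides an open neighborhood $U$ of the identity in $G$ with $U\cap H=\{e\}$. I would pick a symmetric open neighborhood $V$ of $e$ with $VV\subseteq U$, and check that for any $g\in G$ the translate $Vg$ meets $H$ in at most one point: if $h,h'\in Vg\cap H$, then $h'h^{-1}\in VV^{-1}\cap H\subseteq U\cap H=\{e\}$. If $K\cap H$ were infinite, compactness of $K$ would yield an accumulation point $p\in K$ of this infinite set, and the neighborhood $Vp$ of $p$ would contain at most one point of $H$, contradicting the definition of accumulation point.

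For the bound $\sup_{x\in G}|K\cap Hx|\le|KK^{-1}\cap H|<\infty$, the set $KK^{-1}$ is compact as the continuous image of $K\times K$, so applying the first assertion to $KK^{-1}$ gives $|KK^{-1}\cap H|<\infty$. For a fixed $x\in G$ with $K\cap Hx$ nonempty, I would pick any $k_0\in K\cap Hx$ and note that $k\mapsto kk_0^{-1}$ injects $K\cap Hx$ into $KK^{-1}\cap H$, since $k,k_0\in Hx$ forces $kk_0^{-1}\in H$.

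For the last assertion, I would take a net $(x_\alpha)$ in $\{x\in G:|K\cap Hx|\ge n\}$ converging to some $x\in G$ and try to produce $n$ distinct elements of $K\cap Hx$. For each $\alpha$ choose $n$ distinct $h_{\alpha,1},\dots,h_{\alpha,n}\in H$ with $h_{\alpha,i}x_\alpha\in K$; compactness of $K^n$ lets me pass to a subnet along which $h_{\alpha,i}x_\alpha\to k_i\in K$ for every $i$. Continuity of the group operations and $x_\alpha\to x$ then yield $h_{\alpha,i}\to k_ix^{-1}=:h_i$. Closedness of $H$ forces $h_i\in H$, and discreteness gives a neighborhood $Uh_i$ of $h_i$ with $Uh_i\cap H=\{h_i\}$, so $h_{\alpha,i}=h_i$ eventually in $\alpha$. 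Since only finitely many indices are involved, a common cofinal tail works for all of them simultaneously, and the distinctness of the $h_{\alpha,i}$ transfers to the limits, yielding $n$ distinct elements $k_i=h_ix\in K\cap Hx$.

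The main obstacle is this final step: a priori the limits $h_i$ could collapse, so the hardest point is arguing that they remain distinct. This is precisely where both hypotheses on $H$ are essential—closedness to keep each limit inside $H$, and discreteness to upgrade convergence inside $H$ to eventual equality, which preserves distinctness.
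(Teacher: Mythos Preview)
Your first two parts match the paper's proof almost verbatim: the ``at most one point of $H$ in $Vg$'' observation and the injection $k\mapsto kk_0^{-1}$ into $KK^{-1}\cap H$ are exactly what the paper uses. One small point: the paper runs the first argument with a \emph{sequence} of distinct points of $K\cap H$ and a cluster point, which automatically places two distinct elements of $H$ in the small neighborhood without any separation hypothesis; your set-theoretic accumulation-point phrasing tacitly uses $T_1$ to rule out the possibility that $Vp\cap(K\cap H)$ consists of a single point $h\ne p$.

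For the closedness of $G_n=\{x:|K\cap Hx|\ge n\}$ you take a genuinely different route. The paper shows the complement is open: given $x$ with $|K\cap Hx|<n$ it sets $F=Kx^{-1}\cap H$, notes that the closed set $H\setminus F$ is disjoint from the compact set $Kx^{-1}$, and uses a finite subcover of $K$ to manufacture a neighborhood $O_x$ of $x$ with $KO_x^{-1}\cap H\subset F$, whence $|K\cap Hz|\le|F|<n$ for all $z\in O_x$. You instead establish net-closedness by extracting a convergent subnet in $K^n$ and then using discreteness to upgrade convergence of each $h_{\alpha,i}$ to eventual constancy, which transports distinctness to the limits. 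Both arguments are correct; the paper's stays at the level of open covers and avoids nets, while yours isolates very cleanly where each hypothesis enters---closedness forces the limits into $H$, discreteness prevents collapse---exactly as your final paragraph emphasizes.
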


\begin{proof} First we show that the intersection $K\cap H$ is finite. Assuming that $K\cap H$ is infinite, we can choose a sequence $(x_n)_{n\in\w}$ of pairwise distinct points of $K\cap H$. By the compactness of $K$ the sequence $(x_n)_{n\in\w}$ has an accumulation point $x_\infty\in K$.

Since the subgroup $H\subset G$ is discrete, the unit $1$ of $H$ has a neighborhood $U_1\subset G$ such that $U_1\cap H=\{1\}$. Choose a neighborhood $V_1\subset G$ of $1$ such that $V_1V_1^{-1}\subset U_1$.
Since $x_\infty$ is an accumulation point of the sequence $(x_n)_{n\in\w}$, the neighborhood $V_1x_\infty$ of $x_\infty$ contains two distinct points $x_n, x_m$ of the sequence. Then $x_nx_m^{-1}\in V_1V_1^{-1}\cap H\subset U_1\cap H=\{1\}$ and hence $x_n=x_m$, which contradicts the choice of the sequence $(x_k)$.
So, $K\cap H$ is finite. The set $KK^{-1}$ being a continuous image of the compact space $K\times K$ is compact too, which implies that $KK^{-1}\cap H$ is finite.

Next, we prove that $\sup_{x\in G}|K\cap Hx|\le |KK^{-1}\cap H|<\infty$. Given any point $x\in G$ with $K\cap Hx\ne\emptyset$, choose a point $y\in K\cap Hx$ and observe that $|K\cap Hx|=|K\cap Hy|=|Ky^{-1}\cap H|\le|KK^{-1}\cap H|$. Then $\sup_{x\in G}|K\cap Hx|\le|KK^{-1}\cap H|<\infty$.

Now assume that the discrete subgroup $H$ is closed in $G$. To see that for every $n\in\IN$ the set $G_n=\{x\in G:|K\cap Hx|\ge n\}$ is closed in $G$, choose any element $x\in G$ with $|K\cap Hx|<n$. Then the set $F=Kx^{-1}\cap H$ has cardinality $|F|=|K\cap Hx|<n$. It follows that $H\setminus F$ is a closed subset of $G$, disjoint with the compact set $Kx^{-1}$. For every $y\in K$ we can find a neighborhood $O_y\subset G$ of $y$ and a neighborhood $O_{x,y}\subset G$ of $x$ such that $O_yO_{x,y}^{-1}\cap (H\setminus F)=\emptyset$. By the compactness of $K$, the open cover $\{O_y:y\in K\}$ of $K$ has  finite subcover $\{O_y:y\in E\}$ (here $E\subset K$ is a suitable finite subset of $K$). Then the neighborhood $O_x=\bigcap_{y\in E}O_{x,y}$ of  $x$ has the property: $KO_x^{-1}\cap (H\setminus F)=\emptyset$, which equivalent to $KO_x^{-1}\cap H\subset F$ and implies $O_x\cap G_n=\emptyset$, witnessing that the set $G_n$ is closed in $G$.
\end{proof}

\begin{remark} It is well-known that any discrete subgroup $H$ of a Hausdorff topological group $G$ is closed in $G$. In general case this is not true: for any discrete topolgical group $H$ and an infinite group $G$ endowed with the anti-discrete topology, the subgroup $H\times\{1_G\}$ of $H\times G$ is discrete but not closed in $G$.
\end{remark}

The following theorem implies Theorem~\ref{t:main}, and is the main technical result of the paper.

\begin{theorem}\label{texnic} Let $A,B\subset G$ be two compact subsets of a topological group $G$ such that $aA\cup bB\subset A\cup B$ and $aA\cap bB\subset c(A\cap B)$ for some points $a,b,c\in G$.
The equalities $aA\cup bB=A\cup B$ and $aA\cap bB=c(A\cap B)$ hold if either the subgroup $H_3$ generated by the set $\{a,b,c\}$ is discrete or for some subset $T\subset\{a,b,c\}$ with $\{a,b\}\subset T$, $\{a,c\}\subset T$ or $\{b,c\}\subset T$ the subgroup $H_2$ generated by $T$ is discrete and closed in $G$, and $H_2$ is normal in the subgroup $H_3$.
\end{theorem}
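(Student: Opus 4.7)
The plan is to reduce both parts of the theorem to the finite-set Proposition at the start of the paper, by slicing $G$ along right cosets of a discrete subgroup. In the first case, where $H_3=\langle a,b,c\rangle$ is discrete, I would decompose $G$ into right $H_3$-cosets. Because $\{a,b,c\}\subset H_3$, left multiplication by $a$, $b$, or $c$ preserves each right coset $H_3x$, so both hypothesized inclusions restrict to
$$a(A\cap H_3x)\cup b(B\cap H_3x)\subset (A\cup B)\cap H_3x \quad\text{and}\quad a(A\cap H_3x)\cap b(B\cap H_3x)\subset c\bigl((A\cap B)\cap H_3x\bigr).$$
By Lemma~\ref{l1}, each of $A\cap H_3x$, $B\cap H_3x$, and $(A\cap B)\cap H_3x$ is finite, so the finite-set Proposition applies on every coset and yields equality there; taking the union over the right $H_3$-coset space gives the global equalities.

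For the second case, by the $A\leftrightarrow B$ symmetry (swapping $A$ with $B$ exchanges $a$ and $b$ while fixing $c$), it suffices to treat $T=\{a,b\}$ and $T=\{a,c\}$; the two sub-cases are parallel, with the monotone element being $c$ (acting via $h$) in the first and $b$ (acting via $g$) in the second. I will write out $T=\{a,b\}$. The normality of $H_2$ in $H_3$ is exactly what makes $\langle c\rangle=H_3/H_2$ act on the right coset space $H_2\backslash G$ by $H_2x\mapsto cH_2x=H_2cx$. For every right $H_2$-coset $C$, the quantities $f(C)=|A\cap C|$, $g(C)=|B\cap C|$, $h(C)=|(A\cap B)\cap C|$ are finite by Lemma~\ref{l1}. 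Since $a,b\in H_2$ preserve $C$ while $c$ sends $C$ to $cC$, applying inclusion-exclusion to $(aA\cup bB)\cap C$ and $(aA\cap bB)\cap C$, combined with the two given inclusions, yields
$$h(C)\;\leq\;|aA\cap bB\cap C|\;\leq\;h(c^{-1}C)$$
for every coset $C$. The two desired equalities, restricted to $C$, are together equivalent to both of these bounds being tight, i.e.\ to the constancy of the non-increasing integer sequence $n\mapsto h(c^nC)$ along every $\langle c\rangle$-orbit in $H_2\backslash G$. When that orbit is finite (the sub-case $c^k\in H_2$ for some $k\geq 1$), constancy is automatic from periodicity.

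The main obstacle will be proving constancy when such an orbit is infinite, because the natural telescoping $\sum_n h(c^nC)=\sum_n h(c^{n-1}C)$ becomes an uninformative equality of infinite cardinals. My approach is to exploit the closedness of $H_2$ via the second conclusion of Lemma~\ref{l1}: every level set $E_k=\{x\in G:h(H_2x)\geq k\}$ is closed in $G$, and the monotonicity reads $c^{-1}E_k\subset E_k$. Assuming a strict drop $h(c^{-1}C_0)>h(C_0)$ and setting $k:=h(c^{-1}C_0)$, the cosets $c^{-n}C_0$ with $n\geq 1$ all lie in $E_k$ and are pairwise distinct in the infinite-orbit case. Choosing $x_n\in (A\cap B)\cap c^{-n}C_0$ and using compactness of $A\cap B$, a subsequence $x_{n_j}$ converges to some $x_\infty\in A\cap B\cap E_k$; the coset $H_2x_\infty$ therefore lies in $E_k$ and, by monotonicity together with the strict drop, cannot equal any $c^{m}C_0$ with $m\geq 0$. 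This places $x_\infty$ on a new $\langle c\rangle$-orbit in $E_k$, and iterating the analysis on this orbit, together with the uniform bound $\sup_Ch(C)\leq |(A\cap B)(A\cap B)^{-1}\cap H_2|<\infty$ of Lemma~\ref{l1}, will produce a contradiction. Once constancy is secured, the coset-level cardinality pinch gives both equalities on every $C$, and unioning completes the proof.
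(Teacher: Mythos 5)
Your part I and the framework of your part II coincide with the paper's: slicing along right cosets of the discrete subgroup and counting, which in case I gives the result at once, and in case II yields exactly the paper's inequalities, namely the monotonicity $|A\cap B\cap H_2x|\le|A\cap B\cap H_2c^{-1}x|$ and the equivalence of the desired equalities with constancy of $h$ along each $\langle c\rangle$-orbit of right $H_2$-cosets. The gap is precisely at the step you yourself flag as the main obstacle: constancy along an infinite orbit is never actually proved. Your sketch has two problems there. First, the cluster point $x_\infty$ (in a non-metrizable group you only get a cluster point, not a convergent subsequence, but that is minor) need not lie on a ``new'' orbit: its coset $H_2x_\infty$ is indeed different from $c^mC_0$ for $m\ge0$, but nothing prevents it from being $c^{-n}C_0$ for some $n\ge1$, i.e.\ the same orbit. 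Second, and decisively, the concluding ``iterating the analysis \ldots will produce a contradiction'' is not an argument: each iteration only produces further cosets with $h\ge k$, and since $k\le\sup_C h(C)\le|(A\cap B)(A\cap B)^{-1}\cap H_2|$ anyway, the uniform bound from Lemma~\ref{l1} is never threatened; no quantity grows from one step to the next, so there is nothing for the iteration to terminate on.

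The ingredient you are missing is the non-discreteness of the quotient, which your sketch never uses. One may assume $H_3$ is not discrete (otherwise case I applies), and then, since $H_2$ is discrete and closed, the quotient $H_3/H_2$ is a non-discrete Hausdorff group generated algebraically by the single coset $H_2c$; in such a group every one-sided tail of powers is dense, so for each $x$ the set $\bigcup_{m\le n_x}H_2c^mx$ is dense in $H_3x$, where $n_x$ is an index realizing $\Delta_x=\max_{n\in\IZ}|A\cap B\cap H_2c^nx|$. Since the level set $\{g\in G:|A\cap B\cap H_2g|\ge\Delta_x\}$ is closed by the last part of Lemma~\ref{l1} (this is exactly where closedness of $H_2$ enters), it contains the closure of that tail, hence all of $H_3x$; thus $h$ equals $\Delta_x$ on the entire orbit and the constancy you need follows. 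This is the paper's argument: your closed-level-set observation is the right tool, but it must be combined with density of the one-sided orbit in the non-discrete quotient, rather than with a compactness iteration inside $A\cap B$, which by itself cannot rule out a single strict drop.
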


\begin{proof} The proof splits into 2 parts.

I. The subgroup $H_3$ generated by the set $\{a,b,c\}$ is discrete. By the compactness of the sets $A,B,aA,bB$, for every $x\in G$ the sets $(aA\cup bB)\cap H_3x$ and
$(A\cup B)\cap H_3x$ are finite (see Lemma~\ref{l1}), so we can evaluate their cardinality:
$|(aA\cup bB)\cap H_3x|\le |(A\cup B)\cap H_3x|$ and $|aA\cap bB\cap H_3x|\le|c(A\cap B)\cap H_3x|=|A\cap B\cap c^{-1}H_3x|=|A\cap B\cap H_3x|$.
Next, observe that
$$
\begin{aligned}
|(A\cup B)\cap H_3x|&\ge |(aA\cup bB)\cap H_3x|=\\
&=|aA\cap H_3x|+|bB\cap H_3x|-|(aA\cap bB)\cap H_3x|\ge\\
& \ge |A\cap H_3x|+|B\cap H_3x|-|A\cap B\cap H_3x|=|(A\cup B)\cap H_3x|,
\end{aligned}
$$which implies that $|(A\cup B)\cap H_3x|=|(aA\cup bB)\cap H_3x|$ and $|(aA\cap bB)\cap H_3x|=|A\cap B\cap H_3x|$ and hence $(A\cup B)\cap H_3x=(aA\cup bB)\cap H_3x$ and $(aA\cap bB)\cap H_3x=A\cap B\cap H_3x$. Finally,
$$
aA\cup bB=\bigcup_{x\in G}(aA\cup bB)\cap H_3x=\bigcup_{x\in G}(A\cup B)\cap H_3x=A\cup B\mbox{ and}$$
$$aA\cap bB=\bigcup_{x\in G}(aA\cap bB)\cap H_3x=\bigcup_{x\in G}(A\cap B)\cap H_3x=A\cap B.
$$
\smallskip

II. The subgroup $H_3$ is not discrete but for some set $T\subset \{a,b,c\}$ with $\{a,b\}\subset T$, $\{a,c\}\subset T$ or $\{b,c\}\subset T$ the subgroup $H_2$ generated by $T$ is closed and discrete in $G$ and $H_2$ is normal in $H_3$. In this case the quotient group $H_3/H_2$ is not discrete. Let $t$ be the unique element of the set $\{a,b,c\}\setminus T$. It follows that $H_3=\bigcup_{n\in\IZ}H_2t^n$. The normality $H_2$ in $H_3$ implies that $H_2t=tH_2$. Depending on the value of $t$ three cases are possible.
\smallskip

(a) First consider the case of $t=a$ and $\{b,c\}\subset T$. In this case for every $x\in G$ the inclusions $aA\cap bB\cap H_2x\subset c(A\cap B)\cap H_2x$ and $(aA\cup bB)\cap H_2x\subset (A\cup B)\cap H_2x$ imply $|aA\cap bB\cap H_2x|\le|c(A\cap B)\cap H_2x|=|(A\cap B)\cap c^{-1}H_2x|=|(A\cap B)\cap H_2x|$ and
\begin{equation}\label{eq2}
\begin{aligned}
&|(A\cup B)\cap H_2x|\ge|(aA\cup bB)\cap H_2x|=\\
&=|aA\cap H_2x|+|bB\cap H_2x|-|(aA\cap bB)\cap H_2x|\ge\\
&\ge|A\cap a^{-1}H_2x|+|B\cap b^{-1}H_2x|-|(A\cap B)\cap H_2x|=\\
&=|A\cap a^{-1}\!H_2x|-|A\!\cap\! H_2x|+|A\cap H_2x|+|B\cap H_2x|-|(A\cap B)\cap H_2x|=\\
&=|A\cap a^{-1}H_2x|-|A\cap H_2x|+|(A\cup B)\cap H_2x|.
\end{aligned}
\end{equation}

Consequently,
\begin{equation}\label{x->a}
|A\cap H_2a^{-1}x|\le |A\cap H_2x|\mbox{ \ and \ }|A\cap H_2x|\le |A\cap H_2ax|\mbox{ \ for every $x\in G$}.
\end{equation}
By Lemma~\ref{l1}, for every $x\in G$ the number $\alpha_x=\max_{n\in\IZ}|A\cap H_2a^nx|$ is finite, so we can find a number $n_x\in\IZ$ such that $|A\cap H_2a^{n_x}x|=\alpha_x$.
By Lemma~\ref{l1}, the set $G_{\alpha}=\{g\in G:|A\cap H_2g|\ge\alpha_x\}$ is closed in $G$.

 The inequalities (\ref{x->a}) guarantee that $\alpha_x=|A\cap  H_2a^{n_x}x|\le|A\cap H_2a^{m}x|=\alpha_x$ for all $m\ge n_x$ and hence $\bigcup_{m\ge n_x}H_2a^mx\subset G_\alpha$. Taking into account that the quotient group $H_3/H_2$ is not discrete and is generated by the coset $H_2a$, we conclude that the set $\bigcup_{m\ge n_x}H_2a^mx$ is dense in $H_3$, and hence $H_3\subset G_\alpha$, which means that $|A\cap H_2a^mx|=\alpha_x$ for all $m\in\IZ$ and hence all inequalities in (\ref{x->a}) and (\ref{eq2}) turn into equalities. In particular, $|(aA\cup bB)\cap H_2x|=|(A\cup B)\cap H_2x|$ and $|(aA\cap bB)\cap H_2x|=|(A\cap B)\cap H_2x|=|c(A\cap B)\cap H_2x|$ for all $x\in G$. Combining these equalities with the inclusions $(aA\cup bB)\cap H_2x\subset (A\cup B)\cap H_2x$ and $(aA\cap bB)\cap H_2x\subset c(A\cap B)\cap H_2x$, we conclude that $(aA\cup bB)\cap H_2x= (A\cup B)\cap H_2x$ and $(aA\cap bB)\cap H_2x= c\,(A\cap B)\cap H_2x$ for all $x\in G$ and hence
 $$aA\cup bB=\bigcup_{x\in G}(aA\cup bB)\cap H_2x=\bigcup_{x\in G}(A\cup B)\cap H_2x=A\cup B\mbox{ and}$$
 $$aA\cap bB=\bigcup_{x\in G}(aA\cap bB)\cap H_2x=\bigcup_{x\in G}c(A\cap B)\cap H_2x=c(A\cap B).$$
\smallskip

(b) The case of $t=b$ and $\{a,c\}\subset T$ can be considered by analogy with the case (a).
\smallskip

(c) Finally we consider the case of $t=c$ and $\{a,b\}\subset H_2$.  Observe that for every $x\in G$ the inclusion $(aA\cap bB)\cap H_2x\subset c(A\cap B)\cap H_2x$ implies $$|aA\cap bB\cap H_2x|\le|c(A\cap B)\cap H_2x|=|A\cap B\cap c^{-1}H_2x|=|A\cap B\cap H_2c^{-1}x|.$$ On the other hand,
\begin{equation}\label{eq:c}
\begin{aligned}
&|A\cap B\cap H_2c^{-1}x|\ge |aA\cap bB\cap H_2x|=\\
&=|aA\cap H_2x|+|bB\cap H_2x|-|(aA\cup bB)\cap H_2x|\ge\\
&\ge |A\cap a^{-1}H_2x|+|B\cap b^{-1}H_2x|-|(A\cup B)\cap H_2x|=\\
&=|A\cap H_2x|+|B\cap H_2x|-(|A\cap H_2x|+|B\cap H_2x|-|A\cap B\cap H_2x|)=\\
&=|A\cap B\cap H_2x|.
\end{aligned}
\end{equation}
 Therefore,
\begin{equation}\label{x->cx}
|A\cap B\cap H_2x|\le |A\cap B\cap H_2c^{-1}x|.
\end{equation}

By Lemma~\ref{l1}, for every $x\in G$ the number $\Delta_x=\max_{n\in\IZ}|A\cap B\cap H_2c^nx|$ is finite, so we can find a number $n_x\in\IZ$ such that $|A\cap B\cap H_2c^{n_x}x|=\Delta_x$.
By Lemma~\ref{l1}, the set $G_{\Delta}=\{g\in G:|A\cap B\cap H_2g|\ge\Delta_x\}$ is closed in $G$.

 The inequality (\ref{x->cx}) guarantees that $\Delta_x=|A\cap B\cap c^{n_x}x|\le|A\cap B\cap H_2c^mx|\le\Delta_x$ for all $m\le n_x$ and hence $\bigcup_{m\le n_x}H_2c^mx\subset G_\Delta$. Taking into account that the quotient group $H_3/H_2$ is not discrete and is generated by the coset $H_2c$, we conclude that the set $\bigcup_{m\le n_x}H_2c^mx$ is dense in $H_3$, and hence $H_3\subset G_\Delta$. Then $|A\cap B\cap H_2c^mx|=\Delta_x$ for every $m\in\IZ$, which implies that all inequalities in (\ref{x->cx}) and (\ref{eq:c}) turn into equalities. In particular, we get the equalities
 $|A\cap B\cap H_2c^{-1}x|=|aA\cap bB\cap H_2x|$ and $|(aA\cup bB)\cap H_2x|=|(A\cup B)\cap H_2x|$, which imply the equalities $aA\cap bB\cap H_2x=c(A\cap B)\cap H_2x$ and $(aA\cup bB)\cap H_2x=(A\cup B)\cap H_2x$ holding for all $x\in G$. Then
 $$aA\cup bB=\bigcup_{x\in G}(aA\cup bB)\cap H_2x=\bigcup_{x\in G}(A\cup B)\cap H_2x=A\cup B\mbox{ \ and}$$
 $$aA\cap bB=\bigcup_{x\in G}(aA\cap bB)\cap H_2x=\bigcup_{x\in G}c(A\cap B)\cap H_2x=c(A\cap B).$$
\end{proof}

\begin{corollary} Let $A,B\subset G$ be two compact subsets of a topological group $G$ such that $aA\cup bB\subset A\cup B$ and $aA\cap bB=\emptyset$ for some points $a,b\in G$. If the cyclic subgroup $H_a$ generated by $a$ is closed and discrete in $G$ and $H_a$ is normal in the subgroup $H_{a,b}$ generated by the set $\{a,b\}$, then $aA\cup bB=A\cup B$ and $A\cap B=\emptyset$.
\end{corollary}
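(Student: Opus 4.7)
The plan is simply to reduce to Theorem~\ref{texnic} by choosing the free parameter $c$ cleverly. Specifically, set $c:=1_G$, the identity of $G$. Then $c(A\cap B)=A\cap B$, and since $aA\cap bB=\emptyset$, the inclusion $aA\cap bB\subset c(A\cap B)$ holds vacuously. Thus both hypotheses $aA\cup bB\subset A\cup B$ and $aA\cap bB\subset c(A\cap B)$ of Theorem~\ref{texnic} are satisfied by the triple $(a,b,c)=(a,b,1_G)$.

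Next I would verify that the additional structural hypothesis of Theorem~\ref{texnic} is met. With $c=1_G$, the subgroup $H_3$ generated by $\{a,b,c\}$ equals $H_{a,b}$. Take $T:=\{a,c\}=\{a,1_G\}\subset\{a,b,c\}$; this is an admissible choice since $\{a,c\}\subset T$. The subgroup $H_2$ generated by $T$ is precisely the cyclic subgroup $H_a$, which is discrete and closed in $G$ by assumption, and is normal in $H_3=H_{a,b}$ by assumption. So the second alternative of Theorem~\ref{texnic} applies (via case (b) of its proof, in which $t=b$ is the element of $\{a,b,c\}\setminus T$). If it should happen that $H_3=H_{a,b}$ is itself discrete, the first alternative applies instead; either way Theorem~\ref{texnic} yields the equalities $aA\cup bB=A\cup B$ and $aA\cap bB=c(A\cap B)=A\cap B$.

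Finally, combining the second equality with the hypothesis $aA\cap bB=\emptyset$ gives $A\cap B=\emptyset$, completing the proof. There is no real obstacle here, since the work has already been done in Theorem~\ref{texnic}; the only thing to notice is that the emptiness of $aA\cap bB$ lets us take $c=1_G$ for free, and that $H_{a,b}$ coincides with $H_3$ under this choice, so the normality and discreteness hypotheses on $H_a$ transfer verbatim into the hypotheses of the theorem.
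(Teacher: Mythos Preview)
Your proof is correct and follows exactly the paper's approach: the paper's proof is the single line ``Apply Theorem~\ref{texnic} with a point $c\in\{a,b,1_G\}$,'' and your choice $c=1_G$ with $T=\{a,c\}$ is a valid instantiation of this (indeed the most natural one), with all the details spelled out explicitly.
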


\begin{proof} Apply Theorem~\ref{texnic} with a point $c\in\{a,b,1_G\}$.
\end{proof}

The most intriguing problem about (weakly) 2-swelling groups concerns the real line (and the circle).

\begin{problem}[Muranov] Is the additive group $\IR$ of real numbers 2-swelling? Is the group $\IR/\IZ$ weakly 2-swelling?
\end{problem}

Theorem~\ref{texnic} implies that if $\IR$ is not 2-swelling, then it contains two compact sets $A,B\subset \IR$ such that $(a+A)\cup (b+B)\subset A\cup B$ and $(a+A)\cap (b+B)\subset c+(A\cap B)$ for some non-zero real numbers $a,b,c$ such that all fractions $\frac{a}{b}$, $\frac{a}{c}$, $\frac{b}{c}$ are irrational. The following proposition shows that such sets $A,B$ necessarily have positive Lebesgue measure.

\begin{proposition} Assume that $A,B\subset G$ are two non-empty compact subsets of $\IR$ such that $aA\cup bB\subset A\cup B$ for some non-zero real numbers $a,b$ with irrational fraction $\frac{a}{b}$. Then $A+b\IZ=\IR=B+a\IZ$ and hence the sets $A,B$ have positive Lebesgue measure.
\end{proposition}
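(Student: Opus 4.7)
The hypothesis $(a+A)\cup(b+B)\subset A\cup B$ allows defining a self-map $f\colon A\cup B\to A\cup B$ by $f(x)=x+a$ for $x\in A$ and $f(x)=x+b$ for $x\in B\setminus A$. I would fix any $x_0\in A$ and study the forward orbit $x_n=f^n(x_0)$. Let $k_n$ be the number of indices $i<n$ with $x_i\in A$, so $x_n-x_0=k_na+(n-k_n)b$. Since $A\cup B$ is bounded, this displacement is uniformly bounded in $n$, which forces $k_n=\frac{b}{b-a}\,n+O(1)$; in particular $k_n\to\infty$, and since $k_n$ is non-decreasing with increments in $\{0,1\}$, every non-negative integer is attained by some $k_n$.

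For each integer $k\ge 0$ let $s_k$ denote the time of the $(k{+}1)$-st visit of the orbit to $A$. Then $k_{s_k}=k$, so $x_{s_k}=x_0+ka+(s_k-k)b\equiv x_0+ka\pmod{b\IZ}$. Projecting to the circle $\IR/b\IZ$ via $\pi$, the compact image $\pi(A)$ therefore contains every point $\pi(x_0)+k\alpha$, where $\alpha=\pi(a)$. Because $a/b$ is irrational, $\alpha$ is an irrational rotation, so $\{k\alpha:k\ge 0\}$ is dense in $\IR/b\IZ$; being closed, $\pi(A)$ must equal the whole circle, i.e.\ $A+b\IZ=\IR$. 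The hypothesis is symmetric in $(A,a)$ and $(B,b)$, so the same argument applied with the roles exchanged gives $B+a\IZ=\IR$. Finally, $\IR=\bigcup_{n\in\IZ}(A+nb)$ is a countable union of translates of the single compact set $A$, so $A$ must have positive Lebesgue measure, and analogously for $B$.

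The main point where care is needed is the dynamical bookkeeping: one must verify that the orbit genuinely visits $A$ infinitely often (guaranteed by $k_n\to\infty$) and that the successive visits to $A$ project to the \emph{exact} arithmetic progression $\pi(x_0)+k\alpha$ on the quotient (guaranteed by the identity $k_{s_k}=k$). Once that is clean, density of an irrational rotation orbit on the circle does all the work, and the passage to positive measure is routine.
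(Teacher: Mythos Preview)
Your proof is correct and follows essentially the same route as the paper's: build an orbit by adding $a$ when the current point lies in $A$ and $b$ otherwise, use boundedness of $A\cup B$ to force infinitely many visits to $A$, and observe that the projections of these visits to $\IR/b\IZ$ form a dense irrational-rotation orbit inside the compact image of $A$. Your explicit bookkeeping with $k_n$ and the hitting times $s_k$ is in fact cleaner than the paper's somewhat terse version of the same idea.
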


\begin{proof} Given any point $x_0\in A$, for every $n\in\IN$ let $$x_{n}=\begin{cases}
a+x_{n-1}&\mbox{if $x_{n-1}\in A$},\\
b+x_{n-1}&\mbox{otherwise,}
\end{cases}
\mbox{ \ \ and \ \ }s_n=\begin{cases}
a&\mbox{if $x_{n-1}\in A$},\\
b&\mbox{otherwise}.
\end{cases}
$$
Using the inclusion $(a+A)\cup (b+B)\subset A\cup B$ we can prove by induction that
$\{x_n\}\subset A\cup B$. The compactness of $A\cup B$ implies that the sets $\{n\in\IN:s_n=a\}$ and $\{n\in\IN:s_n=b\}$ are infinite. Consider the cyclic subgroup $b\IZ$ generated by $b$ and let $q_b:\IR\to \IR/b\IZ$ be the quotient homomorphism. Observe that for every $n\in\IN$ $q_b(x_n)=q_b(x_n)$ if $s_n=b$ and $q_b(x_n)=q_b(a+x_{n-1})=q_b(a)+q_b(x_{n-1})$ if $x_n\in A$. Since the set $\{n\in\IN:s_n=a\}$ is infinite and the fraction $\frac{a}{b}$ is irrational, the set $q_b(\{x_n\}_{n\in\IN})$ is dense in the quotient group  $\IR/b\IZ$. The definition of the numbers $s_n$ guarantees that $q_b(\{x_n\})_{n\in\IN})\subset q_b(A)$. Now the compactness of $A$ guarantees that $q_b(A)=\IR/b\IZ$ and hence $A+b\IZ=\IR$.

By analogy, we can prove that $B+a\IZ=\IR$.
\end{proof}

\end{document}